\newtheorem{thm}{Theorem}
\newtheorem{lem}{Lemma}
\newtheorem{definition}{Definition}
\newtheorem{rmk}{Remark}
\journal{Statistics and Probability Letter}
\begin{document}

\begin{frontmatter}



\title{On the asymptotic variance of reversible Markov chain without cycles}


\author[Wu]{Chi-Hao Wu}
\ead{r05246009@ntu.edu.tw}

\author[Chen]{Ting-Li Chen\corref{cor1}}
\ead{tlchen@stat.sinica.edu.tw}

\cortext[cor1]{Corresponding author}
\address[Wu]{Institute of Applied Mathematical Sciences,
National Taiwan University, Taipei 10617, Taiwan}
\address[Chen]{Institute of Statical Sciences,
Academia Sinica, Taipei 11529, Taiwan}

\begin{abstract}
   Markov chain Monte Carlo(MCMC) is a popular approach to sample from high dimensional distributions, and the asymptotic variance is a commonly used criterion to evaluate the performance. While most popular MCMC algorithms are reversible, there is a growing literature on the development and analyses of nonreversible MCMC. \cite{Ch1} showed that a reversible MCMC can be improved by adding an antisymmetric perturbation. They also raised a conjecture that it can not be improved if there is no cycle in the corresponding graph.  In this paper, we present a rigorous proof of this conjecture. The proof is based on the fact that the transition matrix with an acyclic structure will produce minimum commute time between vertices.
\end{abstract}

\begin{keyword}
Markov chain Monte Carlo \sep rate of convergence \sep reversibility
\sep asymptotic variance
\end{keyword}

\end{frontmatter}

\newcommand{\inner}[2]{\langle #1, #2 \rangle _{\pi}}

\section{Introduction}
   Markov chain Monte Carlo(MCMC) is a popular way to sample from high dimensional distributions with a wide variety of applications. For example, \cite{Diaconis2} provided several interesting examples including applications in cryptography and physics. Among the various applications of MCMC, one of the main purpose is to approximate the expectation of a specific function.   \\

   Let $\mathcal{I}$ be a fixed index set of the finite state space, and $\pi$ be a fixed probability measure on $\mathcal{I}$. The expectation of a real-valued function $f$ with respect to $\pi$ is denoted by $\pi(f) = \sum_{i \in \mathcal{I}} f(i) \pi(i)$. When the state space is extremely large, it is usually difficult to compute the expectation directly. In this case, we can construct a Markov chain with invariant distribution $\pi$. Then the empirical distribution of the samples $X_0, X_1, \dotsc, X_n, \dotsc$ converges to $\pi$. Therefore we can approximate $\pi(f)$ with the time average $\frac{1}{n}\sum_{k=0}^{n-1}f(X_k)$.   \\

   When we use MCMC algorithm with the transition matrix $P$ to approximate the expectation of a function $f$, its asymptotic variance $\nu (f, P, \pi)$ 
      \[
         \nu(f, P, \pi) = \lim_{n \to \infty} \mathbb{E}_{\mu} \Big[ \frac{1}{n} \sum_{k=0}^{n-1} f(X_k) - \pi(f) \Big]^2,
      \]
   where $\mu$ is an initial distribution, is a natural way to evaluate its performance. 
   Note that the asymptotic variance is independent to the initial distribution \citep{I}. Many researchers have studied how to optimize the performance of an MCMC algorithm based on this criterion. For examples, \citet{Peskun} showed that the transition matrix with larger values in off-diagonal entries has a smaller asymptotic variance. \cite{F} derived the optimal reversible transition matrix under the worst-case scenario, while \cite{C} obtained the optimal transition matrix under the average case scenario. \citet{W1} proposed a global optimization technique in constructing the optimal transition matrix.   \\

   A necessary condition for the convergence of MCMC is that $\pi$ is invariant under the transition $P$. However, the invariance is usually difficult to check when the state space is extremely large. The reversible condition, $\pi_i p_{i,j} = \pi_j p_{j,i}$, is a sufficient condition to guarantee the invariance, and most of the popular MCMC algorithms such as the Metropolis-Hastings algorithm \citep{Hastings,Met} and the Gibbs sampler \citep{Gibbs} make use of reversibility. Though the reversible condition makes the construction of a transition matrix easier, researchers have shown that reversible chains may not be as efficient as the non-reversible ones \citep{Diaconis1,M,H1,H2}. Recently, there is an increasing number of works on non-reversible MCMC \citep{C,Ch1,B,B1,Poncet}.   \\

   Since reversible transition matrices are found to be less efficient, two interesting questions arise naturally. What kinds of reversible transition matrix can be improved? How can we improve these reversible transition matrices? An earlier result can be found in \cite{Peskun}, and \cite{Ch1} showed that the chain can be uniformly better by adding an antisymmetric perturbation when the corresponding graph has cycles. They also proposed a conjecture that if the graph corresponding to the reversible transition matrix does not have any cycle, there does not exist a Markov chain that is uniformly better.   \\

   In this paper, we give a proof to the conjecture. It starts with some preliminaries listed in section~2, and the main result is presented in section~3. Finally, the conclusion is summarized in section~4.


\section{Preliminaries}
   In this paper, we only consider irreducible Markov chain. We start with introducing some notations and terminologies that will be used throughout the text.   \\

   A reversible Markov chain with a transition matrix $P$ has a corresponding undirected graph $G = (V, E)$, where $V$ is its state space and $E$ is the set of its edges. For any pair of states $i$ and $j$, $p_{i,j} > 0$ if and only if $(i, j) \in E$.   \\

   A cycle in a graph $G = (V, E)$ is a path which starts and ends at the same vertex without any other repeated vertices along the path. When the graph $G = (V, E)$ is undirected, a cycle must contain at least $3$ different vertices. If a graph has no cycle, we say it is acyclic.   \\

   An undirected acyclic graph $G = (V, E)$ has a tree structure. We say a vertex $i$ is a leaf if there is only one edge $(i, j)$ in $E$ that directly connects $i$ with another vertex $j$.

   \begin{definition}
      For real-valued functions $f$ and $g$ on $\mathcal{I}$, their inner product with respect to a probability distribution $\pi$ is defined as
      \[
         \inner{f}{g} = \sum_{i \in \mathcal{I}} f(i) g(i) \pi(i)
      \]
   \end{definition}

   \begin{lem} \label{L1}
      For any real-valued function $f$
      \[
         \nu(f, P, \pi) = 2 \inner{\big[(I-P+\Pi)^{-1}-\Pi\big] \{f-\pi(f)\}}{f-\pi(f)} - \inner{f-\pi(f)}{f-\pi(f)}
      \]
      where $\Pi$ is a matrix whose rows all equal $\pi$
   \end{lem}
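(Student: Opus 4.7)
The plan is the classical telescoping computation of the asymptotic variance of a stationary Markov chain, followed by identifying the resulting series with the fundamental matrix $Z := (I-P+\Pi)^{-1}$. First, I would use the fact (noted right after the definition of $\nu$) that $\nu(f,P,\pi)$ is independent of the initial distribution to take $\mu=\pi$, so the chain is stationary, and set $\tilde f := f - \pi(f)$, so that $\Pi\tilde f = 0$. Expanding the square and using stationarity together with the Markov property to write $\mathbb{E}_\pi[\tilde f(X_k)\tilde f(X_l)]=\inner{P^{|k-l|}\tilde f}{\tilde f}$ then gives
\[
\mathbb{E}_\pi\Big[\Big(\sum_{k=0}^{n-1}\tilde f(X_k)\Big)^2\Big]
= n\,\inner{\tilde f}{\tilde f}+2\sum_{k=1}^{n-1}(n-k)\inner{P^k\tilde f}{\tilde f}.
\]
Dividing by $n$ and sending $n\to\infty$ (the standard asymptotic-variance limit) yields
\[
\nu(f,P,\pi)=\inner{\tilde f}{\tilde f}+2\sum_{k=1}^{\infty}\inner{P^k\tilde f}{\tilde f}.
\]

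Next, I would establish the operator identity
\[
\sum_{k=0}^{\infty}\bigl(P^k-\Pi\bigr)=(I-P+\Pi)^{-1}-\Pi,
\]
which is the fundamental-matrix identity for finite Markov chains. Multiplying both sides on the left by $I-P+\Pi$, the left-hand side telescopes to $I-\Pi$ via the commutation relations $P\Pi=\Pi P=\Pi^2=\Pi$, while the right-hand side reduces to $I-\Pi$ using the same relations in the form $(I-P+\Pi)\Pi=\Pi$. Because $\Pi\tilde f=0$, the $\Pi$ summand contributes nothing when paired with $\tilde f$, so
\[
\sum_{k=0}^{\infty}\inner{P^k\tilde f}{\tilde f}=\inner{[(I-P+\Pi)^{-1}-\Pi]\tilde f}{\tilde f}.
\]
Peeling off the $k=0$ term $\inner{\tilde f}{\tilde f}$ from the series in the formula for $\nu$ and substituting what remains produces exactly the expression stated in the lemma.

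The only technical point is the convergence of $\sum(P^k-\Pi)$. Since $P$ is an irreducible (and, after the usual aperiodic reduction, aperiodic) finite-state transition matrix, $P^k\to\Pi$ geometrically in operator norm, so the series converges absolutely and every manipulation above is rigorous. I do not anticipate any serious obstacle here: the statement is essentially a textbook identity for the fundamental matrix of a finite Markov chain, with the main work being bookkeeping around the relations $P\Pi=\Pi P=\Pi$ and $\Pi\tilde f=0$.
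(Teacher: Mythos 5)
The paper offers no argument of its own for this lemma (it simply cites Theorem~4.8 of its reference), so a self-contained proof is a genuinely different contribution, and the skeleton of yours is the right one: stationary expansion of the square, the identity $\sum_{k\ge 0}(P^k-\Pi)=(I-P+\Pi)^{-1}-\Pi$ via $P\Pi=\Pi P=\Pi^2=\Pi$, and $\Pi\tilde f=0$. (You also, correctly, read the definition of $\nu$ with the standard normalization $\lim_n \frac1n\mathbb{E}_\pi\big[\sum_{k=0}^{n-1}\tilde f(X_k)\big]^2$; as displayed in the paper the factor of $n$ is missing.) There is, however, one genuine gap: your convergence step assumes $P^k\to\Pi$ geometrically and waves at periodicity with ``the usual aperiodic reduction,'' which is not an argument --- there is no reduction that leaves both sides of the identity unchanged (lazifying $P$ alters $\nu$). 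The gap bites exactly in the setting this lemma is used for: the chains of Theorem~1 are reversible walks on trees with at most one nonzero holding probability, hence typically bipartite with period $2$. For such chains $P^k$ does not converge, and $\sum_{k\ge1}\langle P^k\tilde f,\tilde f\rangle_{\pi}$ need not converge at all (for the two-state flip chain the terms alternate in sign without decaying), so your displayed series formula for $\nu$ is not even well defined there, let alone a consequence of ``dividing by $n$ and sending $n\to\infty$.''

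The repair stays entirely inside your computation: do not throw away the Ces\`aro weights. From your expansion, $\frac1n\mathbb{E}_\pi\big[\sum_{k=0}^{n-1}\tilde f(X_k)\big]^2=\langle\tilde f,\tilde f\rangle_{\pi}+2\sum_{k=1}^{n-1}\big(1-\tfrac kn\big)\langle P^k\tilde f,\tilde f\rangle_{\pi}$, and the weighted sum is precisely a Ces\`aro average of the partial sums $A_m=\sum_{k=1}^{m}\langle P^k\tilde f,\tilde f\rangle_{\pi}$. Your own telescoping gives $(I-P+\Pi)S_N=I-P^{N+1}$ with $S_N=\sum_{k=0}^{N}(P^k-\Pi)$, hence $A_m=\langle(I-P+\Pi)^{-1}(I-P^{m+1})\tilde f,\tilde f\rangle_{\pi}-\langle\tilde f,\tilde f\rangle_{\pi}$ is bounded, and since $\frac1n\sum_{m=0}^{n-1}P^{m}\to\Pi$ for every irreducible finite chain (no aperiodicity needed), the Ces\`aro averages of $A_m$ converge to $\langle(I-P+\Pi)^{-1}\tilde f,\tilde f\rangle_{\pi}-\langle\tilde f,\tilde f\rangle_{\pi}=\langle Z\tilde f,\tilde f\rangle_{\pi}-\langle\tilde f,\tilde f\rangle_{\pi}$, using $(I-\Pi)\tilde f=\tilde f$ and $\langle\Pi\tilde f,\tilde f\rangle_{\pi}=0$. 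Substituting back yields $\nu=2\langle Z\tilde f,\tilde f\rangle_{\pi}-\langle\tilde f,\tilde f\rangle_{\pi}$ for all irreducible chains, periodic or not; with this replacement your proof is complete.
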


   \begin{proof}
      See Theorem~$4.8$ in \cite{I}.
   \end{proof}

   When we consider only real-valued function in $\mathcal{N} = \{ f: \pi(f) = 0 \}$, the formula will be simplified into $\nu(f, P, \pi) = 2 \inner{\big[(I-P+\Pi)^{-1}-\Pi\big] f}{f} - \inner{f}{f}$. $(I-P+\Pi)^{-1}-\Pi$ is the so called fundamental matrix, and we denote it as $Z$. Since $\inner{f}{f}$ is independent to the transition matrix, the minimization of the asymptotic variance is equivalent to the minimization of $\inner{Z f}{f}$.    \\

   The first hitting time and the first return time are defined as
   \[
      T_i = \inf \{ t \geq 0: X_t = i \}
   \]
   and
   \[
      T_i^{+} = \inf \{ t \geq 1: X_t = i \}.
   \]

   In the following, we list some useful formulae that will be used in our proof.

   \begin{lem} \label{FUND}
      \[
         z_{ij} = \pi_j \mathbb{E}_{\pi} T_j - \pi_j \mathbb{E}_i T_j.
      \]
   \end{lem}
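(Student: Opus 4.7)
The plan is to identify $z_{ij}$ as the solution of a Poisson equation and then match that solution with a hitting-time expression. Write $\tilde Z = (I-P+\Pi)^{-1}$, so $Z = \tilde Z - \Pi$. The key observation is that $\pi^\top \tilde Z = \pi^\top$ (since $\pi^\top(I-P+\Pi)=\pi^\top$ using $\pi^\top P=\pi^\top$ and $\pi^\top\Pi=\pi^\top$), which gives $\pi^\top Z = 0$. Consequently, for any $f$, the vector $g = Zf$ satisfies $(I-P)g = f - \pi(f)\mathbf 1$ and has zero $\pi$-mean, and it is uniquely characterized by these two properties on an irreducible chain.

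Next, I would specialize to $f = e_j$, the indicator of state $j$, so that $(Ze_j)(i) = z_{ij}$ and $\pi(f) = \pi_j$. The natural candidate built from hitting times is $h_j(i) = \mathbb{E}_i T_j$, with $h_j(j)=0$. By first-step analysis, for $i\ne j$ one has $h_j(i) = 1 + \sum_k p_{ik} h_j(k)$, so $(I-P)h_j(i) = 1$. At $i=j$, first-step analysis on the return time gives $\mathbb{E}_j T_j^+ = 1 + \sum_k p_{jk}\mathbb{E}_k T_j$, and because $\pi$ is invariant one has $\mathbb{E}_j T_j^+ = 1/\pi_j$, yielding $(I-P)h_j(j) = -(1/\pi_j - 1)$. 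Combining both cases, $(I-P)h_j = \mathbf 1 - (1/\pi_j)e_j$.

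Multiplying through by $-\pi_j$ produces $(I-P)(-\pi_j h_j) = e_j - \pi_j \mathbf 1 = f - \pi(f)\mathbf 1$, so $-\pi_j h_j$ solves the same Poisson equation as $Ze_j$. The two solutions therefore differ by a constant multiple of $\mathbf 1$: $Ze_j = -\pi_j h_j + c\,\mathbf 1$. To pin down $c$, apply $\pi^\top$ to both sides: the left side vanishes by $\pi^\top Z = 0$, and the right side gives $c = \pi_j \mathbb{E}_\pi T_j$. Reading off the $i$-th coordinate yields exactly $z_{ij} = \pi_j \mathbb{E}_\pi T_j - \pi_j \mathbb{E}_i T_j$.

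The only nontrivial ingredient is the identity $\mathbb{E}_j T_j^+ = 1/\pi_j$ from the ergodic theorem for recurrence times, together with the uniqueness-up-to-constants of Poisson-equation solutions on an irreducible finite chain; both are standard. Everything else is bookkeeping, so I do not anticipate a substantive obstacle.
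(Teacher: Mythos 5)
Your proof is correct, and it is genuinely different in character from what the paper does: the paper offers no argument at all for this lemma, simply citing Lemmas~11 and~12 of Chapter~2 of Aldous and Fill, whereas you give a self-contained derivation. Your route --- showing $(I-P)Z = I - \Pi$ and $\pi^{\top}Z = 0$, recognizing $Ze_j$ as the unique $\pi$-centered solution of the Poisson equation $(I-P)g = e_j - \pi_j\mathbf 1$, checking by first-step analysis that $-\pi_j\,\mathbb{E}_{\cdot}T_j$ solves the same equation (with Kac's formula $\mathbb{E}_j T_j^{+} = 1/\pi_j$ handling the diagonal entry), and fixing the additive constant with $\pi^{\top}Z=0$ --- is sound at every step; I verified the matrix identities $\Pi\tilde Z = \tilde Z\Pi = \Pi$ and $(I-P)\tilde Z = I-\Pi$ that your computation implicitly relies on, and the uniqueness-up-to-constants claim is exactly the simplicity of the eigenvalue~$1$ for an irreducible finite chain. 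What the citation buys the paper is brevity; what your argument buys is self-containment and transparency: it isolates precisely which standard facts (Kac's formula, invertibility of $I-P+\Pi$, simplicity of the unit eigenvalue) the mean-hitting-time representation of the fundamental matrix rests on, and it would let the paper drop the external reference entirely.
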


   \begin{proof}
      See Lemma~$11$ and Lemma~$12$ in Chapter~$2$ of \cite{A}.
   \end{proof}

   \begin{lem} \label{L2}
      \[
         \mathbb{E}_i T_j + \mathbb{E}_j T_k - \mathbb{E}_i T_k = \mathbb{P}_i (T_k < T_j) (\mathbb{E}_j T_k + \mathbb{E}_k T_j).
      \]
   \end{lem}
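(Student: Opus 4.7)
The plan is to decompose both $\mathbb{E}_i T_j$ and $\mathbb{E}_i T_k$ by conditioning on which of $j$ or $k$ the chain visits first, so that the symmetric difference on the left side collapses into a multiple of the commute time $\mathbb{E}_j T_k + \mathbb{E}_k T_j$.

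Concretely, I would introduce the hitting time of the pair, $T_{\{j,k\}} = \min(T_j, T_k)$, and partition on the two events $\{T_j < T_k\}$ and $\{T_k < T_j\}$, which together have probability one whenever $j \neq k$ by irreducibility. On $\{T_j < T_k\}$ we have $T_{\{j,k\}} = T_j$ and, by the strong Markov property at time $T_j$, the remaining time to reach $k$ has conditional expectation $\mathbb{E}_j T_k$; symmetrically on $\{T_k < T_j\}$. Taking expectations gives the two clean identities
\[
\mathbb{E}_i T_k = \mathbb{E}_i T_{\{j,k\}} + \mathbb{P}_i(T_j < T_k)\,\mathbb{E}_j T_k,
\qquad
\mathbb{E}_i T_j = \mathbb{E}_i T_{\{j,k\}} + \mathbb{P}_i(T_k < T_j)\,\mathbb{E}_k T_j.
\]
Subtracting eliminates the nuisance term $\mathbb{E}_i T_{\{j,k\}}$ and yields
\[
\mathbb{E}_i T_j - \mathbb{E}_i T_k = \mathbb{P}_i(T_k < T_j)\,\mathbb{E}_k T_j - \mathbb{P}_i(T_j < T_k)\,\mathbb{E}_j T_k.
\]
Adding $\mathbb{E}_j T_k$ to both sides and using $\mathbb{P}_i(T_j < T_k) = 1 - \mathbb{P}_i(T_k < T_j)$ collapses the right-hand side to $\mathbb{P}_i(T_k < T_j)(\mathbb{E}_j T_k + \mathbb{E}_k T_j)$, which is exactly the claim.

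I do not anticipate a real obstacle here; the only subtlety is making sure the strong Markov decomposition is applied cleanly and that the degenerate cases ($i=j$, $i=k$, or $j=k$) are handled. In each of these cases both sides vanish or reduce to the commute time trivially, so the identity holds universally. The argument uses nothing beyond the strong Markov property and is independent of reversibility, which is appropriate since Lemma~\ref{L2} will later be combined with Lemma~\ref{FUND} to relate the fundamental matrix entries to commute times on the underlying tree.
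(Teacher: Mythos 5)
Your proof is correct. Note, however, that the paper does not actually prove Lemma~\ref{L2} at all: it simply cites Corollary~10 in Chapter~2 of \cite{A}, so there is no internal argument to compare against. What you supply is a clean, self-contained derivation of that cited identity: decompose $T_j$ and $T_k$ at the stopping time $T_{\{j,k\}}=\min(T_j,T_k)$, use the strong Markov property (noting $\{X_{T_{\{j,k\}}}=j\}=\{T_j<T_k\}$ and that ties are impossible for $j\neq k$), subtract to cancel $\mathbb{E}_i T_{\{j,k\}}$, and use $\mathbb{P}_i(T_j<T_k)+\mathbb{P}_i(T_k<T_j)=1$. This is essentially the standard argument behind the Aldous--Fill result, it needs only finiteness of hitting times (guaranteed by irreducibility of the finite chain) and no reversibility, and your check of the degenerate cases $i=j$, $i=k$, $j=k$ is a harmless bonus since Lemma~\ref{L4} only ever invokes the identity for three distinct indices. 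The only thing your write-up buys beyond the paper is self-containedness; the only thing the paper's citation buys is brevity.
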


   \begin{proof}
      See Corollary~$10$ in Chapter~$2$ of \cite{A}.
   \end{proof}

   \begin{lem} \label{L3}
      \[
         \mathbb{E}_i T_j + \mathbb{E}_j T_i = \big[ \pi_i \mathbb{P}_i (T_j < T_i^{+}) \big]^{-1}.
      \]
   \end{lem}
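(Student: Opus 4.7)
The plan is to combine Kac's lemma, $\mathbb{E}_i T_i^+ = 1/\pi_i$, with an excursion decomposition from $i$ conditioned on whether $j$ is visited before the chain returns to $i$. Set $p = \mathbb{P}_i(T_j < T_i^+)$ and $A = \{T_j < T_i^+\}$; the target identity is equivalent to $\mathbb{E}_i T_i^+ = p\,(\mathbb{E}_i T_j + \mathbb{E}_j T_i)$, so I will aim for this reformulation and then invoke Kac.

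First, I would decompose $\mathbb{E}_i T_i^+$ over $A$ and $A^c$. Applying the strong Markov property at the stopping time $T_j$ on the event $A$ gives, after reaching $j$, an expected remaining time $\mathbb{E}_j T_i$ to arrive at $i$. Hence
\[
\mathbb{E}_i T_i^+ = \mathbb{E}_i[\mathbf{1}_A T_j] + p\,\mathbb{E}_j T_i + \mathbb{E}_i[\mathbf{1}_{A^c} T_i^+].
\]

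Second, I would apply the same idea to $\mathbb{E}_i T_j$. On $A^c$ the chain returns to $i$ at time $T_i^+$ without having visited $j$; the strong Markov property at $T_i^+$ then restarts it at $i$, giving $\mathbb{E}_i[\mathbf{1}_{A^c} T_j] = \mathbb{E}_i[\mathbf{1}_{A^c} T_i^+] + (1-p)\,\mathbb{E}_i T_j$. Combining this with the trivial split $\mathbb{E}_i T_j = \mathbb{E}_i[\mathbf{1}_A T_j] + \mathbb{E}_i[\mathbf{1}_{A^c} T_j]$ and rearranging yields
\[
p\,\mathbb{E}_i T_j = \mathbb{E}_i[\mathbf{1}_A T_j] + \mathbb{E}_i[\mathbf{1}_{A^c} T_i^+].
\]

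Adding $p\,\mathbb{E}_j T_i$ to both sides and comparing with the first display, the bookkeeping terms $\mathbb{E}_i[\mathbf{1}_A T_j]$ and $\mathbb{E}_i[\mathbf{1}_{A^c} T_i^+]$ cancel simultaneously, leaving $\mathbb{E}_i T_i^+ = p\,(\mathbb{E}_i T_j + \mathbb{E}_j T_i)$. Substituting Kac's lemma $\mathbb{E}_i T_i^+ = 1/\pi_i$ and inverting produces the stated formula.

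The only delicate point is the second strong Markov application: conditional on $A^c$, the post-$T_i^+$ chain must be independent of the past and distributed as a fresh chain from $i$, so that its hitting time of $j$ has expectation $\mathbb{E}_i T_j$ rather than a value skewed by the conditioning event. This uses that $T_i^+$ is a stopping time and that, on $A^c$, the entire pre-$T_i^+$ path stays away from $j$, so no information about the future excursion survives beyond the state at time $T_i^+$. This is standard but is the substantive step; everything else is algebra on the two decompositions.
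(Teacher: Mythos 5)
Your argument is correct, and it is genuinely different from what the paper does: the paper gives no proof at all, simply citing Corollary~8 in Chapter~2 of Aldous and Fill, where the identity is obtained by an occupation-measure/renewal--reward argument (the number of visits to $i$ before hitting $j$ is geometric with success probability $p=\mathbb{P}_i(T_j<T_i^+)$, and its mean also equals $\pi_i(\mathbb{E}_iT_j+\mathbb{E}_jT_i)$ by the cycle trick). You instead reduce the statement to Kac's formula: your two strong-Markov decompositions are valid, since $A=\{T_j<T_i^+\}\in\mathcal{F}_{T_j}$ and $A^c\in\mathcal{F}_{T_i^+}$ with $X_{T_i^+}=i$, so $\mathbb{E}_i[\mathbf{1}_A(T_i^+-T_j)]=p\,\mathbb{E}_jT_i$ and $\mathbb{E}_i[\mathbf{1}_{A^c}(T_j-T_i^+)]=(1-p)\,\mathbb{E}_iT_j$ (your informal ``independent of the past'' phrasing is exactly this measurability plus the strong Markov property), and the cancellation indeed yields $\mathbb{E}_iT_i^+=p\,(\mathbb{E}_iT_j+\mathbb{E}_jT_i)$, hence the lemma after substituting $\mathbb{E}_iT_i^+=1/\pi_i$. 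Two small points worth stating explicitly: all expectations are finite because the chain is irreducible on a finite state space, which licenses the rearrangements, and $p>0$ by irreducibility, which licenses the final inversion. The payoff of your route is a short self-contained proof given Kac's lemma; the cited route buys the same identity as a byproduct of a more general occupation-measure framework used elsewhere in Aldous and Fill.
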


   \begin{proof}
      See Corollary~$8$ in Chapter~$2$ of \cite{A}.
   \end{proof}

   \begin{definition}
      A transition matrix $P'$ is uniformly better than another $P$, denoted as $P' \succ P$, if
      \begin{enumerate}[1.]
         \item For any real function $f$
            \[
               \nu (f, P', \pi) \leq \nu(f, P, \pi).
            \]
         \item There exists at least one $f$ such that
            \[
               \nu (f, P', \pi) < \nu(f, P, \pi).
            \]
      \end{enumerate}
   \end{definition}

\section{Main result}
 In this section, we will prove the conjecture proposed in \citep{Ch1}.  We start with a few lemmas necessary for the proof of our main result.

   \begin{lem} \label{L4}
      Let $f_{ij}$ in $\mathcal{N}$ be $(0, \dotsc, 0, -\pi_i^{-1}, 0, \dotsc, 0, \pi_j^{-1}, 0, \dotsc, 0)^{T}$, while $a$ and $b$ be real numbers, then
         \begin{multline*}
            \inner{Z \{a f_{ij} + b f_{jk}\}}{a f_{ij} + b f_{jk}} =
               a^2 \big[ \mathbb{E}_i T_j + \mathbb{E}_j T_i \big] +
               b^2 \big[ \mathbb{E}_j T_k + \mathbb{E}_k T_j \big] +   \\
               ab  \big[ \mathbb{P}_k (T_i < T_j)(\mathbb{E}_i T_j + \mathbb{E}_j T_i) + \mathbb{P}_i (T_k < T_j)(\mathbb{E}_j T_k + \mathbb{E}_k T_j) \big]
         \end{multline*}
      where $i$, $j$ and $k$ are three different indices.
   \end{lem}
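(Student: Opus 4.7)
My approach is to expand the bilinear form, reduce each piece to pure hitting-time expressions via Lemma~\ref{FUND}, and then invoke Lemma~\ref{L2} to rewrite the remaining differences of hitting times as commute times weighted by hitting probabilities.

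First I would write
\begin{align*}
\inner{Z(af_{ij}+bf_{jk})}{af_{ij}+bf_{jk}}
 = a^{2}\inner{Zf_{ij}}{f_{ij}} + ab\bigl(\inner{Zf_{ij}}{f_{jk}}+\inner{Zf_{jk}}{f_{ij}}\bigr) + b^{2}\inner{Zf_{jk}}{f_{jk}},
\end{align*}
so the task reduces to evaluating three kinds of inner products. Because $f_{ij}$ has only two nonzero coordinates, applying Lemma~\ref{FUND} coordinatewise yields
\[
(Zf_{ij})_\ell \;=\; \mathbb{E}_\ell T_i - \mathbb{E}_\ell T_j + (\mathbb{E}_\pi T_j - \mathbb{E}_\pi T_i),
\]
and analogously for $Zf_{jk}$. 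The additive constant $\mathbb{E}_\pi T_j-\mathbb{E}_\pi T_i$ is harmless: when I pair $Zf_{ij}$ against any $f\in\mathcal{N}$, the $\pi$-inner product of a constant with $f$ vanishes, so these constants never appear in the final expression.

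Next I would plug these expressions into the three inner products. For the diagonal term $\inner{Zf_{ij}}{f_{ij}}$, the two nonzero entries of $f_{ij}$ contribute $-(Zf_{ij})_i+(Zf_{ij})_j$, which telescopes cleanly to the commute time $\mathbb{E}_i T_j+\mathbb{E}_j T_i$; the same works for $\inner{Zf_{jk}}{f_{jk}}$, accounting for the $a^{2}$ and $b^{2}$ coefficients. The cross term $\inner{Zf_{ij}}{f_{jk}}+\inner{Zf_{jk}}{f_{ij}}$ expands, after the same bookkeeping on the nonzero coordinates of $f_{ij}$ and $f_{jk}$, into a combination of six one-way hitting times that groups naturally into two ``triangle'' differences of the form
\[
\bigl(\mathbb{E}_i T_j+\mathbb{E}_j T_k-\mathbb{E}_i T_k\bigr) \;\text{and}\; \bigl(\mathbb{E}_k T_j+\mathbb{E}_j T_i-\mathbb{E}_k T_i\bigr).
\]
Each of these is exactly the left-hand side of Lemma~\ref{L2} (with the appropriate relabelling of roles), so Lemma~\ref{L2} converts them into
$\mathbb{P}_i(T_k<T_j)(\mathbb{E}_j T_k+\mathbb{E}_k T_j)$ and $\mathbb{P}_k(T_i<T_j)(\mathbb{E}_i T_j+\mathbb{E}_j T_i)$, giving the stated $ab$-coefficient.

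The main technical nuisance will be the bookkeeping in the cross term: there are six signed hitting times to track, and one has to recognise the correct pair of triangle-inequality patterns before Lemma~\ref{L2} can be applied. I would carry out the computation with the two summands $\inner{Zf_{ij}}{f_{jk}}$ and $\inner{Zf_{jk}}{f_{ij}}$ kept separate until the very end, only combining them after writing each in the form $\mathbb{E}_*T_*-\mathbb{E}_*T_*-\mathbb{E}_*T_*$, so that the two applications of Lemma~\ref{L2} appear in parallel and the signs can be checked against the reversible-case identity $\inner{Zf_{ij}}{f_{jk}}=\inner{f_{ij}}{Zf_{jk}}$ as a sanity check.
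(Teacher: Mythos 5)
Your plan is the same as the paper's: expand the bilinear form into the terms $\inner{Zf_{ij}}{f_{ij}}$, $\inner{Zf_{jk}}{f_{jk}}$, $\inner{Zf_{ij}}{f_{jk}}+\inner{Zf_{jk}}{f_{ij}}$, evaluate entries of $Z$ via Lemma~\ref{FUND}, and convert the cross terms by Lemma~\ref{L2}; the diagonal terms are handled correctly and exactly as in the paper.

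The problem is the sign of the cross term, and it is a step that would fail under careful execution of your own plan. From your (correct) formula $(Zf_{ij})_\ell=\mathbb{E}_\ell T_i-\mathbb{E}_\ell T_j+(\mathbb{E}_\pi T_j-\mathbb{E}_\pi T_i)$ and $\inner{g}{f_{jk}}=-g_j+g_k$, one gets
\[
\inner{Zf_{ij}}{f_{jk}}=\mathbb{E}_k T_i-\mathbb{E}_j T_i-\mathbb{E}_k T_j=-\bigl(\mathbb{E}_k T_j+\mathbb{E}_j T_i-\mathbb{E}_k T_i\bigr),
\qquad
\inner{Zf_{jk}}{f_{ij}}=-\bigl(\mathbb{E}_i T_j+\mathbb{E}_j T_k-\mathbb{E}_i T_k\bigr),
\]
so the triangle differences enter with a minus sign and your computation, done faithfully, yields $-ab\bigl[\mathbb{P}_k(T_i<T_j)(\mathbb{E}_iT_j+\mathbb{E}_jT_i)+\mathbb{P}_i(T_k<T_j)(\mathbb{E}_jT_k+\mathbb{E}_kT_j)\bigr]$, not the $+ab[\cdots]$ you claim to obtain. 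The minus sign is in fact the correct one: taking $a=b=1$ gives $af_{ij}+bf_{jk}=f_{ik}$, whose quadratic form is the commute time $\mathbb{E}_iT_k+\mathbb{E}_kT_i$, and summing Lemma~\ref{L2} over the two role assignments shows this equals the commute times of $(i,j)$ and $(j,k)$ \emph{minus} the bracketed quantity; the symmetric walk on a triangle gives a concrete numerical contradiction to the $+$ version. So your proposal reproduces the printed statement but not a valid derivation of it; the paper's own proof makes the same slip (its expansion of $\inner{Zf_{ij}}{f_{jk}}$ flips a sign), and your suggested sanity check via self-adjointness of $Z$ cannot detect this because it is symmetric in the two cross terms. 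The discrepancy is harmless downstream: Lemma~\ref{L5} only uses that the $ab$-coefficients of $P$ and $P'$ coincide and vanish in the relevant situation, which holds with either sign, but a complete proof of Lemma~\ref{L4} should state and derive the cross term with the minus sign.
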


   \begin{proof}
      Let $z_j$ be the $j$th column of $Z$.
      \begin{align*}
         \inner{Z f_{ij}}{f_{ij}} &= \inner{-\pi_{i}^{-1} z_i + \pi_{j}^{-1} z_j}{f_{ij}}   \\
                                  &= -\pi_{i}^{-1} (-z_{ii} + z_{ji}) + \pi_{j}^{-1} (-z_{ij} + z{jj})   \\
                                  &= \mathbb{E}_i T_j + \mathbb{E}_j T_i \
      \end{align*}
      where the last equality is by Lemma \ref{FUND}. Similarly, we can show that $\inner{Z f_{jk}}{f_{jk}}=\mathbb{E}_j T_k + \mathbb{E}_k T_j$.   For the cross term,
      \begin{align*}
         \inner{Z f_{ij}}{f_{jk}} &= \inner{-\pi_{i}^{-1} z_i + \pi_{j}^{-1} z_j}{f_{jk}}   \\
                                  &= -\pi_{i}^{-1} (z_{ji} - z_{ki}) + \pi_{j}^{-1} (z_{jj} - z_{kj})   \\
                                  &= \mathbb{E}_k T_j + \mathbb{E}_j T_i - \mathbb{E}_k T_i   \\
                                  &= \mathbb{P}_k (T_i < T_j) (\mathbb{E}_i T_j + \mathbb{E}_j T_i)
      \end{align*}
      where the last equality is by Lemma \ref{L2}. Similarly, we can show that $\inner{Z f_{jk}}{f_{ij}}=\mathbb{P}_i (T_k < T_j) (\mathbb{E}_j T_k + \mathbb{E}_k T_j)$. These lead to the statement of the lemma.
   \end{proof}

   \begin{lem} \label{L5}
      Let $P$ and $P'$ be two transition matrices such that $\nu(f, P', \pi) \leq \nu(f, P, \pi)$ for all real-valued function $f$, and let $T_i$ and $T_i'$ be their first hitting time respectively.
      If $\mathbb{E}_i T_j + \mathbb{E}_j T_i = \mathbb{E}_i T_j' + \mathbb{E}_j T_i'$ and $\mathbb{P}_k(T_i < T_j) = \mathbb{P}_i(T_k < T_j)=0$,
      then $p'_{k,i} = p'_{i,k}=0$.
   \end{lem}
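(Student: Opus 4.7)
The plan is to plug a two-parameter family of test functions into Lemma~\ref{L4} under both $P$ and $P'$, and then exploit the hypothesis $\nu(f,P',\pi)\le\nu(f,P,\pi)$ to force the vanishing of certain hitting probabilities in $P'$. Since $\nu(f,P,\pi)=2\inner{Zf}{f}-\inner{f}{f}$ for $f\in\mathcal{N}$ and the $\inner{f}{f}$ term does not depend on the transition matrix, the hypothesis on $\nu$ is equivalent to $\inner{Z'f}{f}\le\inner{Zf}{f}$ for all such $f$. I will take $f=a f_{ij}+b f_{jk}$ so that this inequality becomes a pointwise comparison of two quadratic forms in $(a,b)$.

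By Lemma~\ref{L4}, the $P$-form has $a^2$ coefficient $\mathbb{E}_iT_j+\mathbb{E}_jT_i$ and $ab$ coefficient $\mathbb{P}_k(T_i<T_j)(\mathbb{E}_iT_j+\mathbb{E}_jT_i)+\mathbb{P}_i(T_k<T_j)(\mathbb{E}_jT_k+\mathbb{E}_kT_j)$, the latter being $0$ by the second hypothesis; the $P'$-form has the analogous expressions with primes. By the first hypothesis, the $a^2$ coefficients of the two forms coincide, so the nonnegative quadratic form $\inner{Zf}{f}-\inner{Z'f}{f}$ reduces to $(\gamma-\gamma')b^2+(\beta-\beta')ab$, where $\beta,\beta'$ denote the $ab$ coefficients and $\gamma,\gamma'$ the $b^2$ coefficients.

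For any fixed $b\ne 0$ this expression is affine in $a$ and nonnegative on all of $\mathbb{R}$, hence its slope must vanish: $(\beta-\beta')b=0$, so $\beta'=\beta=0$. Now $\beta'=\mathbb{P}'_k(T_i'<T_j')(\mathbb{E}_iT_j'+\mathbb{E}_jT_i')+\mathbb{P}'_i(T_k'<T_j')(\mathbb{E}_jT_k'+\mathbb{E}_kT_j')$ is a sum of two nonnegative terms whose commute-time factors are strictly positive, so both probability factors vanish: $\mathbb{P}'_k(T_i'<T_j')=\mathbb{P}'_i(T_k'<T_j')=0$. Finally, the one-step event $\{X_1=i\}$ starting from $k$ is contained in $\{T_i'<T_j'\}$, because on it $T_i'=1$ while $T_j'\ge 2$; hence $p'_{k,i}\le\mathbb{P}'_k(T_i'<T_j')=0$, and by the same argument $p'_{i,k}=0$.

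The key step, and the place where one must be careful, is the choice of test family: taking $a f_{ij}+b f_{jk}$ is precisely what makes the commute-time hypothesis control the $a^2$ coefficient while the $ab$ coefficient isolates the two hitting probabilities that need to be pinned down. Once the ansatz is fixed, everything else reduces to the elementary fact that a nonnegative affine function on $\mathbb{R}$ is constant together with the observation that a single-step transition probability is dominated by the corresponding hitting-before-$j$ probability.
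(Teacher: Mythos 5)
Your proposal is correct and follows essentially the same route as the paper: plug $f=af_{ij}+bf_{jk}$ into Lemma~\ref{L4}, use the equality of commute times to cancel the $a^2$ coefficients, let $a$ range over $\mathbb{R}$ to force the cross coefficients to coincide, and then read off $\mathbb{P}_k(T'_i<T'_j)=\mathbb{P}_i(T'_k<T'_j)=0$ from the vanishing of a sum of nonnegative terms. Your final step, bounding $p'_{k,i}$ and $p'_{i,k}$ by the corresponding hitting probabilities via the one-step event, is a nice explicit justification of a deduction the paper leaves implicit.
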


   \begin{proof}
      Let $x=\mathbb{E}_j T_k + \mathbb{E}_k T_j$, $y=\mathbb{P}_k (T_i < T_j)(\mathbb{E}_i T_j + \mathbb{E}_j T_i) + \mathbb{P}_i (T_k < T_j)(\mathbb{E}_j T_k + \mathbb{E}_k T_j)$, $x'$ and $y'$ be the corresponding terms for $P'$. Define $f_{ij}$ as in Lemma \ref{L4}. Since $\nu(a f_{ij} + b f_{jk}, P', \pi) \leq \nu(a f_{ij} + b f_{jk}, P, \pi)$ for any real number $a$, $b$ and $\mathbb{E}_i T_j + \mathbb{E}_j T_i = \mathbb{E}_i T_j' + \mathbb{E}_j T_i'$, by Lemma~\ref{L4}, we have
      \[
         b^2 x' + ab y' \leq b^2 x + ab y \quad \Longrightarrow \quad ab (y' - y) \leq b^2 (x - x').
      \]
      Since $ab$ can go to $\pm \infty$, the inequality hold only if $y' - y=0$.   \\

      Since $\mathbb{P}_k(T_i < T_j) = \mathbb{P}_i(T_k < T_j)=0$, we have $y = 0$, which implies $y' = 0$. Since $$y'=\mathbb{P}_k (T_i' < T_j')(\mathbb{E}_i T_j' + \mathbb{E}_j T_i') + \mathbb{P}_i (T_k' < T_j')(\mathbb{E}_j T_k' + \mathbb{E}_k T_j'),$$ $\mathbb{P}_k(T_i' < T_j') = \mathbb{P}_i(T_k' < T_j')=0$, which further implies that $p_{ki}' = p_{ik}'=0$.
   \end{proof}

   \begin{thm} \label{T1}
      Let $P$ be a reversible transition matrix of which the diagonal has at most one non-zero entry, and the corresponding graph $G = (V, E)$ is acyclic. If another transition matrix $P'$ has the property that $\nu(f, P', \pi) \leq \nu(f, P, \pi)$ for all real-valued function $f$, then $P = P'$.
   \end{thm}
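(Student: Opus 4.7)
The plan is to deduce $P'=P$ by processing the tree from the leaves inward, using Lemma~\ref{L3} to bound commute times of $P'$ from below and Lemma~\ref{L5} to kill its off-tree entries. Observe first that $\langle f_{ij},f_{ij}\rangle_\pi=\pi_i^{-1}+\pi_j^{-1}$ does not depend on the chain, so the hypothesis applied at $f=f_{ij}$ reduces, via Lemma~\ref{L4} with $b=0$, to the pointwise commute-time bound $\mathbb{E}_iT'_j+\mathbb{E}_jT'_i\le\mathbb{E}_iT_j+\mathbb{E}_jT_i$ for every pair $(i,j)$.

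For the base case I would pick a leaf $\ell$ of $G$ with $p_{\ell\ell}=0$; such a leaf exists because every tree has at least two leaves while at most one diagonal entry of $P$ is nonzero. Its unique neighbor $m$ satisfies $p_{\ell m}=1$, so Lemma~\ref{L3} applied to $P$ gives $\mathbb{E}_\ell T_m+\mathbb{E}_m T_\ell=1/\pi_\ell$. The identity in Lemma~\ref{L3} holds for every irreducible chain with stationary $\pi$ (it follows from Kac's formula via the chain watched at visits to $\{\ell,m\}$), so applied to $P'$ together with $\mathbb{P}'_\ell(T'_m<T'_\ell{}^+)\le 1$ it yields $\mathbb{E}_\ell T'_m+\mathbb{E}_m T'_\ell\ge 1/\pi_\ell$. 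Combined with the upper bound above, equality is forced and $\mathbb{P}'_\ell(T'_m<T'_\ell{}^+)=1$. Since $m$ separates $\ell$ from every $k\ne\ell,m$ on the tree, $\mathbb{P}_\ell(T_k<T_m)=\mathbb{P}_k(T_\ell<T_m)=0$, and Lemma~\ref{L5} applied to the triple $(\ell,m,k)$ yields $p'_{\ell k}=p'_{k\ell}=0$ for every such $k$. Conditioning on the first step then shows $\mathbb{P}'_\ell(T'_m<T'_\ell{}^+)=p'_{\ell m}$ once those zeros are in place, so $p'_{\ell m}=1$ and $p'_{\ell\ell}=0$, and the stationarity equation $\pi_\ell=\pi_m p'_{m\ell}$ finally gives $p'_{m\ell}=\pi_\ell/\pi_m=p_{m\ell}$. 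Thus the row and column of $\ell$ in $P'$ agree with those of $P$.

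To complete the proof I would peel off $\ell$ and iterate on $G\setminus\{\ell\}$. The hardest part is that ``$P$ has at most one nonzero diagonal entry'' is not literally preserved by a trace construction on $V\setminus\{\ell\}$ (absorbing the $\ell$-excursions introduces a self-loop at $m$), so one cannot blindly recurse on a reduced transition matrix. I would instead track the unknown entries of $P'$ directly: once enough peeling has been done that every non-tree-neighbor $v$ of $i$ already satisfies $p'_{iv}=0$, the escape probability in Lemma~\ref{L3} collapses on each interior tree edge $(i,j)$ to $\mathbb{P}'_i(T'_j<T'_i{}^+)=p'_{ij}$, exactly as on $P$; the variance upper bound then forces $p'_{ij}=p_{ij}$, and stationarity supplies $p'_{ji}$ and the remaining self-loops. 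The delicate part is the scheduling of this peeling---in particular, isolating the one vertex that may carry a nonzero diagonal entry of $P$ until the final step, and verifying that at every intermediate stage enough of the already-known zeros are available to collapse the escape probability of Lemma~\ref{L3} as needed.
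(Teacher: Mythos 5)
Your overall route is the same as the paper's (peel zero-self-loop leaves, force commute-time equality along the peeled edge via Lemma~\ref{L3}, kill stray entries via Lemma~\ref{L5}), and your base case is complete and correct --- indeed a bit more explicit than the paper about recovering $p'_{m\ell}$ from stationarity of $\pi$ under $P'$. The genuine gap is the inductive step, which is where the substance of the theorem lies, and as sketched its logic is circular. For an interior tree edge $(i,j)$ you cannot start from the collapse $\mathbb{P}_i(T'_j<{T'}_i^{+})=p'_{i,j}$: at the moment $i$ is processed, the entries $p'_{i,k}$ towards the \emph{not-yet-processed} vertices $k\ne j$ are still unknown, and the only tool that kills them is Lemma~\ref{L5}, whose hypothesis is the commute-time equality $\mathbb{E}_i T_j+\mathbb{E}_j T_i=\mathbb{E}_i T'_j+\mathbb{E}_j T'_i$ for that very edge. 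Unlike at the first leaf, this equality is not free: for an interior edge $\mathbb{P}_i(T_j<T_i^{+})=p_{i,j}<1$, so the $P$-commute time $1/(\pi_i p_{i,j})$ is no longer the unconstrained minimum $1/\pi_i$, and ``Lemma~\ref{L3} applied to $P'$ plus the variance bound'' does not force equality by itself. Your sketch never invokes Lemma~\ref{L5} after the base case and defers exactly this verification as ``the delicate part''; without it neither the row zeros $p'_{i,k}$ nor the column zeros $p'_{k,i}$ (which your stationarity computation for $p'_{j,i}$ silently needs) are available.

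The missing argument is a \emph{constrained} optimality statement, and it is the heart of the paper's proof. Because every processed vertex already has its $P'$-row and $P'$-column equal to those of $P$, each processed subtree hanging off $i$ is, under $P'$, confined until it hits $i$ (its rows are $P$-rows, and under $P$ its only exit is $i$), while processed vertices that are not $G$-neighbours of $i$ have $p'_{i,k}=p_{i,k}=0$ by column equality. Hence, bounding by one the probabilities of reaching $j$ before $i$ from unprocessed states,
\[
\mathbb{P}_i(T'_j<{T'}_i^{+})\;\le\; p'_{i,j}+\sum_{k\,\mathrm{unproc.},\,k\neq i,j} p'_{i,k}\;=\;1-p'_{i,i}-\sum_{k\,\mathrm{proc.}} p'_{i,k}\;\le\;1-\sum_{k\,\mathrm{proc.}} p_{i,k}\;=\;p_{i,j}\;=\;\mathbb{P}_i(T_j<T_i^{+}),
\]
the last equalities using $p_{i,i}=0$ and the fact that $j$ is the only unprocessed $G$-neighbour of $i$. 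Lemma~\ref{L3} then gives $\mathbb{E}_i T'_j+\mathbb{E}_j T'_i\ge \mathbb{E}_i T_j+\mathbb{E}_j T_i$, the variance hypothesis gives the reverse inequality, and only now does Lemma~\ref{L5} apply (the separation conditions $\mathbb{P}_k(T_i<T_j)=\mathbb{P}_i(T_k<T_j)=0$ hold for unprocessed $k\ne j$ because all unprocessed vertices other than $i$ lie on the $j$-side of $i$), yielding $p'_{i,k}=p'_{k,i}=0$ for those $k$. After that, your collapse, the row sum and stationarity do finish the row and column of $i$: $p'_{i,j}=p_{i,j}$, $p'_{i,i}=0=p_{i,i}$, $p'_{j,i}=p_{j,i}$. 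Incidentally, the scheduling issue you worry about is not delicate: a pivot leaf must have zero $P$-diagonal, and since every remaining subtree with at least two vertices has at least two leaves while $P$ has at most one nonzero diagonal entry, such a leaf always exists; the vertex carrying the possible self-loop is simply never chosen, ends up last, and its row and column are then forced by the already-determined columns and its row sum.
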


   \begin{proof}
      Pick one leaf $i$ from $G$ with $p_{i,i}=0$. Since it is a leaf, there is only one neighbor connected to $i$, say $j$. Since $\mathbb{P}_i (T_j < T_i^{+})=1$, $\mathbb{E}_i T_j + \mathbb{E}_j T_i$ is minimized by Lemma~\ref{L3}. With the assumption $\nu(f, P', \pi) \leq \nu(f, P, \pi)$, it leads to $\mathbb{E}_i T_j + \mathbb{E}_j T_i=\mathbb{E}_i T'_j + \mathbb{E}_j T'_i$. Since $i$ is a leaf,  $\mathbb{P}_k(T_i < T_j) = \mathbb{P}_i(T_k < T_j)=0$ for $k\ne i,j$. Then by Lemma~\ref{L5},  $p'_{i,k}=p'_{k,i}=0$. Furthermore, $\mathbb{E}_i T_j + \mathbb{E}_j T_i=\mathbb{E}_i T'_j + \mathbb{E}_j T'_i$ leads to $\mathbb{P}_i (T'_j < {T'}_i^{+})=\mathbb{P}_i (T_j < T_i^{+})=1$, which implies $p'_{i,i}=0$ and $p'_{i,j}=1$. That means, the transitions of $i$-the row and $i$-th column of $P'$ and $P$ are the same.   \\

      Now that the $i$-th row and $i$-th column of $P'$ are determined, we remove the vertex $i$ from $V$ and the edge $(i,j)$ from $E$. Denote this new graph as $G_1=(V_1,E_1)$. In the following, we iteratively determine each row and column of $P'$ and update the associated graph with the similar rule.   \\

      Assume that we have determined $t-1$ rows of $P'$ and have the new graph $G_{t-1}=(V_{t-1},E_{t-1})$.
      Similar to the argument for the first iteration, now we pick one leaf $i_t$ from $G_{t-1}$ with $p_{i_t,i_t}=0$, and suppose that $j_t$ is its only neighbor in $G_{t-1}$. For convenience, we first define $G_t=(V_t,E_t)$ where $V_t=V_{t-1}\setminus i_t$ and $E_t=E_{t-1}\setminus (i_t,j_t)$.   \\


      If a vertex $k$ is in $V \setminus V_{t-1}$, it is processed in some earlier iteration; therefore, $p'_{\ell,k} = p_{\ell,k}$ and $p'_{k,\ell} = p_{k,\ell}$ for any $\ell$. Given a vertex $k$ in $V \setminus V_{t-1}$ with the property $p_{i_t, k} > 0$, since $k$ was a leaf in some earlier iteration, those vertices that the chain starting from $k$ can reach without passing $i_t$ are also in $V \setminus V_{t-1}$. That means the connected component containing $k$ in $(V \setminus \{i\}, E \setminus \{(i, j) \in E: j \in V\})$ belong to $V \setminus V_{t-1}$. Since rows and columns in $P'$ corresponding to vertices in that connected component are the same as those in $P$, any path from $k$ to $j_t$ in $G'$, the associated graph of $P'$, has to pass through $i_t$; this means that $\mathbb{P}_k (T'_{j_t} < {T'}_{i_t}^{+})=0$. Therefore, $p_{i_t,k}' \mathbb{P}_k (T'_{j_t} < {T'}_{i_t}^{+})=0$ for any $k \in V \setminus V_{t-1}$. Then


       \begin{align*}
         \mathbb{P}_{i_t} (T'_{j_t} < {T'}_{i_t}^{+})
            &= \sum_{k \in V_t} p_{i_t,k}' \mathbb{P}_k (T'_{j_t} < {T'}_{i_t}^{+})  \\
            &\leq \sum_{k \in V_t} p'_{i_t,k} = 1 - \sum_{k \notin V_t} p_{i_t,k} \\
            &= 1 - \sum_{k \notin V_t} p_{i_t,k} = p_{i_t,j_t}.   \\
            &= \mathbb{P}_{i_t} (T_{j_t} < {T}_{i_t}^{+}).
      \end{align*}
      With the assumption that $\nu(f, P', \pi) \leq \nu(f, P, \pi)$, we have $\mathbb{P}_{i_t} (T_{j_t} < T_{i_t}^{+}) = \mathbb{P}_{i_t} ({T'}_{j_t} < {T'}_{i_t}^{+})$, which leads to $\mathbb{E}_{i_t} T_{j_t} + \mathbb{E}_{j_t} T_{i_t} = \mathbb{E}_{i_t} T_{j_t}' + \mathbb{E}_{j_t} T_{i_t}'$ by Lemma~\ref{L3}. Since $i_t$ is a leaf in $G_{t-1}$, $\mathbb{P}_k(T_i < T_j) = \mathbb{P}_i(T_k < T_j)=0$  for any $k \in V_t \setminus j_t$, which implies $p'_{i_t,k}=p_{i_t,k}$ and $p'_{k,i_t}=p_{k,i_t}$ by Lemma~\ref{L5}. Furthermore, $\mathbb{E}_{i_t} T_{j_t} + \mathbb{E}_{j_t} T_{i_t}=\mathbb{E}_{i_t} T'_{j_t} + \mathbb{E}_{j_t} T'_{i_t}$ leads to $\mathbb{P}_{i_t} (T'_{j_t} < {T'}_{i_t}^{+})=\mathbb{P}_{i_t} (T_{j_t} < T_{i_t}^{+})$, which implies $p'_{i_t,i_t}=0$, $p'_{i,j}=p_{i,j}$ and  $p'_{j_t,i_t}=p_{j_t,i_t}$.  That means, the transitions of $i_t$-the row and $i_t$-th column of $P'$ and $P$ are the same.

      By the iterative process above, we can fully determine $P'=P$.

   \end{proof}

   \begin{rmk}
      Theorem \ref{T1} implies there does not exist $P'$ such that $P' \succ P$, which is the conjecture proposed in \citep{Ch1}.
   \end{rmk}


\section{Conclusion}

   \cite{Ch1} showed that we can improve the performance of a reversible transition matrix by adding an antisymmetric perturbation if there exists a cycle in the corresponding graph. They also proposed a conjecture that there does not exist a uniformly better one if the graph is acyclic. In this paper, we present a rigorous proof to this conjecture. The idea is that the acyclic structure leads to the minimization of the commute time between any two vertices. Along with the fact that the commute time between two vertices is equivalent to the asymptotic variance with respect to a specific $f$, a uniformly better transition has to be with the same acyclic graph structure. This will further imply the transition matrix is exactly the same.

\section*{Reference}
\bibliographystyle{elsarticle-harv}
\bibliography{mcmc}

\end{document}